\newcommand*\pFq[6][8]{%
  \begingroup 
  \pFqmuskip=#1mu\relax
  \mathcode`=\string"8000
  \begingroup\lccode`\~=`\,
  \lowercase{\endgroup\let~}\pFqcomma
  F^{#2}_{#3}{\left(\genfrac..{0pt}{}{#4}{#5}\bigg|#6\right)}%
  \endgroup
}
\newcommand{\pFqcomma}{\mskip\pFqmuskip}
\newtheorem{theorem}{Theorem}[section]
\newtheorem{lemma}[theorem]{Lemma}
\newtheorem{corollary}[theorem]{Corollary}
\begin{document}

\title[Some identities on generalized harmonic numbers and functions]{Some identities on generalized harmonic numbers and generalized harmonic functions}

\author{Dae San  Kim }
\address{Department of Mathematics, Sogang University, Seoul 121-742, Republic of Korea}
\email{dskim@sogang.ac.kr}
\author{Hyekyung Kim}
\address{Department Of Mathematics Education, Daegu Catholic University, Gyeongsan 38430, Republic of Korea}
\email{hkkim@cu.ac.kr}
\author{Taekyun  Kim}
\address{Department of Mathematics, Kwangwoon University, Seoul 139-701, Republic of Korea}
\email{tkkim@kw.ac.kr}

\subjclass[2010]{11B68; 11B83; 26A42}
\keywords{generalized harmonic numbers of order $\alpha$; generalized harmonic function}

\maketitle

\begin{abstract}
The harmonic numbers and generalized harmonic numbers appear
frequently in many diverse areas such as combinatorial problems,
many expressions involving special functions in analytic number
theory and analysis of algorithms. The aim of this paper is to
derive some identities involving generalized harmonic numbers and
generalized harmonic functions from the beta functions
$F_{n}(x)=B(x+1,n+1),\,\,(n=0,1,2,\dots)$ using elementary
methods.
\end{abstract}

\section{Introduction}
For $s\in\mathbb{C}$, the Riemann zeta function is defined by
\begin{displaymath}
    \zeta(s)=\sum_{n=1}^{\infty}\frac{1}{n^{s}},\quad (\mathrm{Re}(s)>1),\quad (\mathrm{see}\ [1,12,14]).
\end{displaymath}
As is well known, we have
\begin{equation}
\zeta(2n)=\frac{(-1)^{n+1}B_{2n}}{2(2n)!}(2\pi)^{2n},\quad (n\in\mathbb{N}),\quad (\mathrm{see}\ [14]),\label{1}
\end{equation}
where $B_{n}$ are the Bernoulli numbers defined by
\begin{equation}
\frac{t}{e^t-1}=\sum_{n=0}^{\infty}B_{n}\frac{t^n}{n!}=1-\frac{1}{2}x+\frac{1}{6}\frac{x^2}{2!}-\frac{1}{30}\frac{x^4}{4!}+\frac{1}{42}\frac{x^6}{6!}-\frac{1}{30}\frac{x^8}{8!}+\cdots,\quad (\mathrm{see}\ [1-14]). \label{2}
\end{equation}
Thus we see from \eqref{1} and \eqref{2} that
\begin{equation}
\zeta(2)=\frac{\pi^2}{6},\quad\zeta(4)=\frac{\pi^4}{90},\quad\zeta(6)=\frac{\pi^6}{945},\quad\zeta(8)=\frac{\pi^8}{9450}, \cdots.\label{3}
\end{equation}
More generally, Hurwitz zeta function is defined by
\begin{equation}
\zeta(x,s)=\sum_{n=0}^{\infty}\frac{1}{(n+x)^{s}},\quad (\mathrm{Re}(s)>1, x >0),\quad (\mathrm{see}\ [14]).\label{4}
\end{equation} \par
The harmonic numbers are defined by
\begin{equation}
H_{0}=0,\quad H_{n}=1+\frac{1}{2}+\cdots+\frac{1}{n},\quad (n\in\mathbb{N}),\quad (\mathrm{see}\ [6-8,10,13]). \label{5}
\end{equation}
The generating function of harmonic numbers is given by
\begin{equation}
-\frac{1}{1-x}\log(1-x)=\sum_{n=1}^{\infty}H_{n}x^{n},\quad (\mathrm{see}\ [6-8,13]). \label{6}
\end{equation}
For $\alpha\in\mathbb{N}$, the generalized harmonic numbers of order $\alpha$ are defined by
\begin{equation}
H_{0}^{(\alpha)}=0,\quad H_{n}^{(\alpha)}=1+\frac{1}{2^{\alpha}}+\frac{1}{3^{\alpha}}+\cdots+\frac{1}{n^{\alpha}},\quad (\mathrm{see}\ [3-5,8]). \label{7}
\end{equation} \par
For $s\in\mathbb{C}$ with $\mathrm{Re}(s)>0$, the gamma function is given by
\begin{equation}
    \Gamma(s)=\int_{0}^{\infty}e^{-t}t^{s-1}dt,\quad (\mathrm{see}\ [9,12,14]), \label{8}
\end{equation}
with $\Gamma(s+1)=s\Gamma(s)$ and $\Gamma(1)=1$.
For $\mathrm{Re}(\alpha)>0$ and $\mathrm{Re}(\beta)>0$, the beta function is given by
\begin{equation}
B(\alpha,\beta)=\int_{0}^{1}t^{\alpha-1}(1-t)^{\beta-1}dt,\quad (\mathrm{see}\ [9,12,14]). \label{9}
\end{equation}
From \eqref{8} and \eqref{9}, we have
\begin{equation}
B(\alpha,\beta)=\frac{\Gamma(\alpha)\Gamma(\beta)}{\Gamma(\alpha+\beta)},\quad (\mathrm{see}\ [14]).\label{10}
\end{equation}\par
Lastly, the following binomial inversion theorem is well known.
\begin{theorem}
For any integer $n \ge 0$, we have
\begin{displaymath}
b_{n}=\sum_{k=0}^{n}\binom{n}{k} (-1)^{k}a_{k} \Longleftrightarrow   a_{n}=\sum_{k=0}^{n}\binom{n}{k} (-1)^{k}b_{k}.
\end{displaymath}
\end{theorem}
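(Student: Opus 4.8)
The plan is to exploit the evident symmetry of the assertion: interchanging the roles of the sequences $(a_n)$ and $(b_n)$ carries each side of the biconditional into the other, since both sides have literally the same shape with $a$ and $b$ swapped. Hence it suffices to prove a single implication, say the forward one, and the converse will follow for free. So first I would assume that $b_n=\sum_{k=0}^{n}\binom{n}{k}(-1)^{k}a_k$ holds for all $n$, and verify that the candidate inverse sum $\sum_{k=0}^{n}\binom{n}{k}(-1)^{k}b_k$ collapses to $a_n$.

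The main computation proceeds by substitution followed by a reversal of the order of summation. Substituting the hypothesis gives
\begin{equation*}
\sum_{k=0}^{n}\binom{n}{k}(-1)^{k}b_k=\sum_{k=0}^{n}\sum_{j=0}^{k}\binom{n}{k}\binom{k}{j}(-1)^{k+j}a_j .
\end{equation*}
Interchanging the two summations so that $j$ runs first (with $k$ then ranging from $j$ to $n$) lets me isolate, for each $j$, the coefficient $\sum_{k=j}^{n}\binom{n}{k}\binom{k}{j}(-1)^{k+j}$ multiplying $a_j$.

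The two elementary facts that drive the cancellation are the subset-of-a-subset identity $\binom{n}{k}\binom{k}{j}=\binom{n}{j}\binom{n-j}{k-j}$, which factors $\binom{n}{j}$ out of the inner sum, and the finite difference evaluation $\sum_{i=0}^{m}\binom{m}{i}(-1)^{i}=(1-1)^{m}$, which equals $0$ for $m\ge 1$ and $1$ for $m=0$. After the reindexing $i=k-j$ (under which $(-1)^{k+j}=(-1)^{i}$), the inner sum becomes $\binom{n}{j}(1-1)^{n-j}$, so only the term $j=n$ survives and contributes exactly $\binom{n}{n}a_n=a_n$. This establishes the forward direction, and by the symmetry noted at the outset the converse holds as well.

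Because every identity invoked is a standard binomial fact, I do not anticipate a genuine obstacle; the only point requiring care is the bookkeeping in the interchange of summation order and tracking the sign $(-1)^{k+j}$ through the substitution. An alternative route would encode the two sequences by their generating functions and recognize the transform as an involution, but the direct double-sum manipulation is shorter and entirely self-contained.
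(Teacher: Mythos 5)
Your argument is correct and complete: the symmetry reduction to a single implication is legitimate (swapping the labels $a\leftrightarrow b$ carries one implication into the other), and the double-sum computation using $\binom{n}{k}\binom{k}{j}=\binom{n}{j}\binom{n-j}{k-j}$ together with $\sum_{i=0}^{m}\binom{m}{i}(-1)^{i}=(1-1)^{m}=\delta_{m,0}$ is the standard and fully rigorous way to collapse the coefficient of $a_j$ to $\delta_{j,n}$. Note that the paper itself offers no proof of this statement --- it is quoted in the introduction as a well-known inversion theorem --- so there is nothing to compare against; your proof supplies exactly the elementary verification the paper omits.
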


In this paper, we derive some expressions for $\sum_{k=0}^{n}\binom{n}{k}(-1)^{k}\frac{1}{(k+x+1)^{r}},\quad \zeta(x+1,r)$, and $\,\, r!$ in terms of generalized harmonic functions and generalized harmonic numbers using elementary methods like the binomial inversion, differentiation and integration.

\section{Some identities on generalized harmonic numbers and generalized harmonic functions}
For every $\alpha\in\mathbb{N}$, we consider the generalized
harmonic function $H_{n}(x,\alpha)$ defined by
\begin{equation}
H_{n}(x,\alpha)=\sum_{k=0}^{n}\frac{1}{(k+x+1)^{\alpha}},\quad (x\ge 0),\quad(\mathrm{see}\ [3-5]). \label{11}
\end{equation}
Note that
\begin{equation}
\frac{d}{dx}H_{n}(x,\alpha)=-\alpha H_{n}(x,\alpha+1).\label{12}
\end{equation}
In particular, for $x=0$, we have $H_{n}(0,\alpha)=H_{n+1}^{(\alpha)}$. \par
For $x >-1$ and $n=0,1,2, \dots$, we let
\begin{align}
F_n(x)&=\int_{0}^{1}(1-t)^{n}t^{x}dt=B(x+1, n+1)
=\frac{\Gamma(x+1)\Gamma(n+1)}{\Gamma(x+n+2)}\label{13}\\
&=\frac{n!}{(x+n+1)(x+n)\cdots(x+1)}=\frac{1}{\binom{x+n+1}{n}(x+1)}. \nonumber
\end{align}
By binomial theorem, \eqref{13} is also equal to
\begin{align}
F_n(x)=\int_{0}^{1}(1-t)^{n}t^{x}dt&=\sum_{k=0}^{n}\binom{n}{k}(-1)^{k}\int_{0}^{1}t^{k+x}dt \label{14} \\
&=\sum_{k=0}^{n}\binom{n}{k}(-1)^{k}\frac{1}{x+k+1}.\nonumber
\end{align}
We will use the following lemma repeatedly throughout this paper.
\begin{lemma}
Let $F_{n}(x)=B(x+1,n+1)$. Then, for any positive integer $r$, we have
\begin{align}
&(a)\quad F_{n}^{(r)}(x)=\int_{0}^{1}(1-t)^n(\log t)^{r}t^{x} dt=r!\sum_{k=0}^{n}\binom{n}{k}(-1)^{k-r}\frac{1}{(x+k+1)^{r+1}}, \nonumber \\
&(b)\quad F_{n}^{\prime}(x)=-H_{n}(x,1)F_n(x), \nonumber\\
&(c)\quad \sum_{n=1}^{\infty}\frac{1}{n}\sum_{k=0}^{n}\binom{n}{k}(-1)^{k}\frac{1}{(k+1)^{r}}=r.\nonumber
\end{align}
\end{lemma}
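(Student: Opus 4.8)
The plan is to prove the three parts of the Lemma in order, using differentiation of the integral representation \eqref{13}--\eqref{14} together with the product formula \eqref{11}.

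For part $(a)$, I would start from the integral form $F_n(x)=\int_0^1 (1-t)^n t^x\,dt$ in \eqref{13} and differentiate $r$ times with respect to $x$. Since $\frac{d}{dx}t^x=(\log t)\,t^x$, each differentiation brings down a factor of $\log t$, so that $F_n^{(r)}(x)=\int_0^1 (1-t)^n (\log t)^r t^x\,dt$, which establishes the first equality. (I would note that differentiation under the integral sign is justified because the integrand and its $x$-derivatives are dominated by integrable functions on $(0,1)$ for $x>-1$.) For the second equality I would instead differentiate the finite-sum representation \eqref{14}: differentiating $\frac{1}{x+k+1}$ once gives $-\frac{1}{(x+k+1)^2}$, and after $r$ differentiations $\frac{d^r}{dx^r}\frac{1}{x+k+1}=(-1)^r r!\,\frac{1}{(x+k+1)^{r+1}}$. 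Substituting this into the $r$-th derivative of \eqref{14} yields exactly $r!\sum_{k=0}^n \binom{n}{k}(-1)^{k-r}\frac{1}{(x+k+1)^{r+1}}$, completing part $(a)$.

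For part $(b)$, I would take the logarithm of the product formula $F_n(x)=\frac{n!}{(x+n+1)(x+n)\cdots(x+1)}$ from \eqref{13} and perform logarithmic differentiation. We have $\log F_n(x)=\log n!-\sum_{k=0}^n \log(x+k+1)$, so differentiating gives $\frac{F_n'(x)}{F_n(x)}=-\sum_{k=0}^n \frac{1}{x+k+1}=-H_n(x,1)$ by the definition \eqref{11}. Multiplying through by $F_n(x)$ yields $F_n'(x)=-H_n(x,1)F_n(x)$, which is the claim.

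For part $(c)$, I would specialize $x=0$ in part $(a)$ with $r=1$, giving $F_n'(0)=-\sum_{k=0}^n\binom{n}{k}(-1)^k\frac{1}{(k+1)^2}$, but more directly I expect to set $x=0$ in part $(b)$ and use $H_n(0,1)=H_{n+1}$ together with $F_n(0)=\frac{n!\cdot 0!}{(n+1)!}=\frac{1}{n+1}$; however, the left-hand side of $(c)$ involves $\frac{1}{(k+1)^r}$ for a general exponent $r$, which suggests the intended route is via part $(a)$ at $x=0$ with the sum $\sum_{n=1}^\infty \frac{1}{n}F_n^{(r-1)}(0)$ rearranged. The main obstacle will be justifying the interchange of summation in $(c)$: I would need to evaluate $\sum_{k=0}^n\binom{n}{k}(-1)^k\frac{1}{(k+1)^r}$ in closed form (this finite alternating sum is, up to sign and the factor $r!$, the value $F_n^{(r-1)}(0)$ appearing in part $(a)$, hence expressible through $F_n(0)=\frac{1}{n+1}$ and its derivatives), and then show that after dividing by $n$ and summing over $n\ge 1$ the series telescopes or collapses to the constant $r$. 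I would handle this by writing the inner sum as a multiple integral or by identifying it with a repeated finite difference, then summing the resulting simple series in $n$; verifying convergence and the precise telescoping that produces exactly $r$ is the delicate step, and I would check it first for small $r$ (e.g.\ $r=1,2$) before assembling the general argument.
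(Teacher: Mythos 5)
Your treatments of parts (a) and (b) are correct and are essentially the paper's own: (a) comes from differentiating the integral representation and the finite-sum representation of $F_n(x)$ $r$ times, and (b) is logarithmic differentiation of the product formula $F_n(x)=\frac{n!}{(x+n+1)(x+n)\cdots(x+1)}$.

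Part (c), however, is a genuine gap. You list several candidate routes (specializing (a) or (b) at $x=0$, a multiple-integral representation, a repeated finite difference) but commit to none of them and explicitly defer the ``delicate step.'' Moreover, the telescoping you anticipate is not what happens; the actual argument is a short direct computation. Write $\frac{1}{(k+1)^{r}}=\int_{0}^{1}\cdots\int_{0}^{1}(x_1x_2\cdots x_r)^{k}\,dx_1\cdots dx_r$, so that by the binomial theorem
\begin{equation*}
\sum_{k=0}^{n}\binom{n}{k}(-1)^{k}\frac{1}{(k+1)^{r}}=\int_{0}^{1}\cdots\int_{0}^{1}(1-x_1x_2\cdots x_r)^{n}\,dx_1\cdots dx_r .
\end{equation*}
Dividing by $n$ and summing over $n\ge 1$, with $u=x_1\cdots x_r$ one has $\sum_{n\ge 1}\frac{1}{n}(1-u)^{n}=-\log u=-(\log x_1+\cdots+\log x_r)$, and the interchange of sum and integral is legitimate because all terms are nonnegative. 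Since $\int_{0}^{1}\log x\,dx=-1$, the resulting $r$-fold integral equals $r$. You did name the multiple-integral idea as one option, but without executing these steps the proof of (c) is missing.
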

\begin{proof}
(a) follows from \eqref{14}. \\
(b) From \eqref{13}, we get
\begin{align*}
F_{n}^{\prime}(x)&=\frac{d}{dx}F_n(x)=\frac{d}{dx}\bigg(\frac{n!}{(x+n+1)(x+n)\cdots(x+1)}\bigg) \\
&=-\sum_{k=0}^{n}\frac{1}{k+x+1}\cdot \frac{n!}{(x+n+1)(x+n)\cdots (x+1)} \\
&=-H_{n}(x,1)F_n(x).
\end{align*}
(c) The left hand side of (c) is equal to
\begin{align*}
\sum_{n=1}^{1}&\frac{1}{n}\int_{0}^{1} \cdots \int_{0}^{1} (1-x_1 x_2 \cdots x_r)^{n} dx_1 dx_2 \cdots dx_r \\
&=-\int_{0}^{1} \cdots \int_{0}^{1} (\log x_1 + \cdots + \log x_r)\,dx_1 dx_2 \cdots dx_r =r,
\end{align*}
where the integrals are $r$-multiple ones.
\end{proof}
From Lemma 2.1, (a), (b), and \eqref{13}, we have
\begin{align}
\sum_{k=0}^{n}\binom{n}{k}(-1)^{k}\frac{1}{(k+x+1)^{2}}=H_{n}(x,1)F_n(x)
=\frac{H_{n}(x,1)}{\binom{x+n+1}{n}(x+1)}.\label{15}
\end{align}
In particular, for $x=0$, we get
\begin{equation}
\sum_{k=0}^{n}\binom{n}{k}(-1)^{k}\frac{1}{(k+1)^{2}}=\frac{1}{n+1}H_{n+1},\quad (n\ge 0),\label{16}
\end{equation}
which, by Lemma 2.1 (c), yields
\begin{equation*}
\sum_{n=1}^{\infty}\frac{1}{n(n+1)}H_{n+1}=2.
\end{equation*}
Therefore, from \eqref{15} and \eqref{16}, by binomial inversion we obtain the following theorem.
\begin{theorem}
For $n\ge 0$, we have
\begin{displaymath}
\sum_{k=0}^{n}\binom{n}{k}(-1)^{k}\frac{H_{k}(x,1)}{\binom{x+k+1}{k}(x+1)}=\frac{1}{(n+x+1)^{2}},\quad\mathrm{and}\quad \sum_{k=0}^{n}\binom{n}{k}(-1)^{k}\frac{H_{k+1}}{k+1}=\frac{1}{(n+1)^{2}}.
\end{displaymath}
\end{theorem}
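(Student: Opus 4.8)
The plan is to apply the binomial inversion theorem (Theorem 1.1) directly to the two identities \eqref{15} and \eqref{16}, each of which is already written in the left-hand form $b_{n}=\sum_{k=0}^{n}\binom{n}{k}(-1)^{k}a_{k}$; inversion then returns the stated sums as the $a_{n}$.

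For the first identity, I would set
\[
a_{k}=\frac{1}{(k+x+1)^{2}},\qquad b_{n}=\frac{H_{n}(x,1)}{\binom{x+n+1}{n}(x+1)}.
\]
With this choice, \eqref{15} is precisely the assertion $b_{n}=\sum_{k=0}^{n}\binom{n}{k}(-1)^{k}a_{k}$. Theorem 1.1 then yields $a_{n}=\sum_{k=0}^{n}\binom{n}{k}(-1)^{k}b_{k}$, and substituting the definitions of $a_{n}$ and $b_{k}$ reproduces the first claimed formula verbatim.

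For the second identity I would repeat the argument with $a_{k}=1/(k+1)^{2}$ and $b_{n}=H_{n+1}/(n+1)$, so that \eqref{16} becomes $b_{n}=\sum_{k=0}^{n}\binom{n}{k}(-1)^{k}a_{k}$; inversion gives $a_{n}=\sum_{k=0}^{n}\binom{n}{k}(-1)^{k}b_{k}$, which is exactly the second formula. Alternatively, the second identity is just the special case $x=0$ of the first, since $H_{k}(0,1)=H_{k+1}$ and $\binom{k+1}{k}(1)=k+1$ force the $k$-th summand to equal $H_{k+1}/(k+1)$ and the right-hand side to collapse to $1/(n+1)^{2}$.

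Because both statements are immediate consequences of inversion, I do not anticipate any substantive obstacle. The only point demanding care is the bookkeeping that matches \eqref{15} and \eqref{16} to the hypothesis of Theorem 1.1, namely confirming that the reciprocal-power terms $1/(k+x+1)^{2}$ and $1/(k+1)^{2}$ play the role of the $a_{k}$ while the closed forms involving $H_{n}(x,1)$ and $H_{n+1}$ play the role of the $b_{n}$. Everything else is a direct relabeling of indices.
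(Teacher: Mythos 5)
Your proposal is correct and follows exactly the route the paper takes: it derives Theorem 2.2 by applying the binomial inversion of Theorem 1.1 directly to the identities \eqref{15} and \eqref{16}, with the same identification of the $a_k$ and $b_n$. The observation that the second identity is the $x=0$ specialization of the first is also consistent with how the paper obtains \eqref{16} from \eqref{15}.
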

We note from Theorem 2.2, \eqref{3} and \eqref{4} that
\begin{align*}
&\sum_{n=0}^{\infty}\sum_{k=0}^{n}\binom{n}{k}(-1)^{k}\frac{H_{k}(x,1)}{\binom{x+k+1}{k}(x+1)}=\sum_{n=0}^{\infty}\frac{1}{(n+x+1)^{2}}=\zeta(x+1,2),\\
&\sum_{n=0}^{\infty}\bigg(\sum_{k=0}^{n}\binom{n}{k}(-1)^{k}\frac{H_{k+1}}{k+1}\bigg)=\sum_{n=0}^{\infty}\frac{1}{(n+1)^{2}}=\frac{\pi^{2}}{6}.
\end{align*}
From Lemma 2.1 (b), \eqref{12} and \eqref{13}, we note that
\begin{align}
F_{n}^{(2)}(x)&=\frac{d^{2}}{dx^{2}}F_n(x)= (-1)\frac{d}{dx}\Big(H_{n}(x,1)F_n(x)\Big) \label{17}\\
&=(-1)^{2}H_{n}(x,2)F_n(x)+(-1)^{2}\Big(H_{n}(x,1)\Big)^{2}F_n(x), \nonumber \\
&=(-1)^{2}\Big(H_{n}(x,2)+\big(H_{n}(x,1)\big)^{2}\Big)\frac{1}{\binom{x+n+1}{n}(x+1)},\nonumber
\end{align}
and that
\begin{align}
    F_{n}^{(3)}(x)&=\frac{d^{3}}{dx^{3}}F_n(x)=\frac{d}{dx}\Big((-1)^{2}H_{n}(x,2)F_n(x)+(-1)^{2}\big(H_{n}(x,1)\big)^{2}F_n(x)\Big) \label{18}\\
    &=2(-1)^{3}H_{n}(x,3)F_n(x)+(-1)^{3}H_{n}(x,2)H_{n}(x,1)F_n(x)\nonumber \\
    &+2(-1)^{3}H_{n}(x,1)H_{n}(x,2)F_n(x)+(-1)^{3}\Big(H_{n}(x,1)\Big)^{}F_n(x)\nonumber \\
    &=2(-1)^3H_{n}(x,3)F_n(x)+3(-1)^3H_{n}(x,1)H_{n}(x,2)F_n(x)+(-1)^3\Big(H_{n}(x,1)\Big)^{3}F_n(x) \nonumber\\
    &=(-1)^3\Big(2H_{n}(x,3)+3H_{n}(x,1)H_{n}(x,2)+\big(H_{n}(x,1)\big)^{3}\Big)\frac{1}{\binom{x+n+1}{n}(x+1)}.\nonumber
\end{align}
By Lemma 2.1 (a), we get
\begin{equation}
F_{n}^{(2)}(x)=2!\sum_{k=0}^{n}\binom{n}{k}(-1)^{k-2}\frac{1}{(x+k+1)^{3}},\quad F_{n}^{(3)}(x)=3!\sum_{k=0}^{n}\binom{n}{k}(-1)^{k-3}\frac{1}{(x+k+1)^{4}}. \label{19}
\end{equation}
Therefore, by \eqref{17}-\eqref{19}, we obtain the following theorem.
\begin{theorem}
For $n\ge 0$, we have
\begin{displaymath}
\frac{H_{n}(x,2)+(H_{n}(x,1))^{2}}{\binom{x+n+1}{n}(x+1)}=2!\sum_{k=0}^{n}\binom{n}{k}(-1)^{k}\frac{1}{(x+k+1)^{3}},
\end{displaymath}
and
\begin{displaymath}
\frac{2H_{n}(x,3)+3H_{n}(x,1)H_{n}(x,2)+(H_{n}(x,1))^{3}}{\binom{x+n+1}{n}(x+1)}=3!\sum_{k=0}^{n}\binom{n}{k}(-1)^{k}\frac{1}{(x+k+1)^{4}}.
\end{displaymath}
For $x=0$, we also have
\begin{displaymath}
2!\sum_{k=0}^{n}\binom{n}{k}\frac{(-1)^{k}}{(k+1)^{3}}=\frac{H_{n+1}^{(2)}+(H_{n+1})^{2}}{n+1},
\end{displaymath}
and
\begin{displaymath}
3!\sum_{k=0}^{n}\frac{\binom{n}{k}(-1)^{k}}{(k+1)^{4}}=\frac{2H_{n+1}^{(3)}+3H_{n+1}H_{n+1}^{(2)}+(H_{n+1})^{3}}{n+1}.
\end{displaymath}
\end{theorem}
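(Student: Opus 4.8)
The plan is to observe that each of the four displayed identities is nothing more than the equality of two expressions for the same higher derivative of $F_n$: one obtained by repeatedly differentiating the logarithmic-derivative formula of Lemma 2.1(b), the other read off from the integral representation of Lemma 2.1(a). Thus the whole proof reduces to performing two differentiations, matching signs, and specializing $x=0$; these are precisely the computations recorded in \eqref{17}--\eqref{19}.

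First I would compute $F_n^{(2)}$ and $F_n^{(3)}$ starting from $F_n'(x)=-H_n(x,1)F_n(x)$, differentiating with the product rule and the recursion \eqref{12}, i.e.\ $\frac{d}{dx}H_n(x,\alpha)=-\alpha H_n(x,\alpha+1)$. One differentiation yields $F_n^{(2)}(x)=\big(H_n(x,2)+(H_n(x,1))^2\big)F_n(x)$ as in \eqref{17}. A second differentiation splits each of the summands $H_n(x,2)F_n$ and $(H_n(x,1))^2F_n$ into several pieces which recombine to $F_n^{(3)}(x)=-\big(2H_n(x,3)+3H_n(x,1)H_n(x,2)+(H_n(x,1))^3\big)F_n(x)$ as in \eqref{18}; the coefficient $3$ is the sum of a $2$ coming from differentiating $(H_n(x,1))^2$ and a $1$ coming from the term $H_n(x,2)F_n'$.

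Next I would use Lemma 2.1(a) with $r=2$ and $r=3$ to rewrite these same derivatives as alternating sums and equate the two forms, then substitute $F_n(x)=\big(\binom{x+n+1}{n}(x+1)\big)^{-1}$ from \eqref{13}. The only subtlety is the factor $(-1)^{k-r}$: since $(-1)^{-2}=1$ we have $(-1)^{k-2}=(-1)^k$, while $(-1)^{-3}=-1$ gives $(-1)^{k-3}=-(-1)^k$, and in the cubic case this extra sign cancels the overall $(-1)^3$ from \eqref{18}, so both identities emerge with a clean $(-1)^k$. To get the $x=0$ versions I would then use $H_n(0,\alpha)=H_{n+1}^{(\alpha)}$ together with $\binom{n+1}{n}(0+1)=n+1$, i.e.\ $F_n(0)=\tfrac{1}{n+1}$, and substitute.

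I do not expect a real obstacle, since the substantive computation is already contained in \eqref{17}--\eqref{19}; the only place requiring care is the bookkeeping in the second differentiation and the tracking of the alternating signs. Structurally, setting $L(x)=\log F_n(x)$ gives $L^{(j)}(x)=(-1)^j(j-1)!\,H_n(x,j)$, so that $F_n^{(r)}/F_n$ is the complete Bell polynomial in $L',\dots,L^{(r)}$; this both explains the coefficients ($1,1$ for $r=2$ and $2,3,1$ for $r=3$) and shows how the analogue for every $r$ would follow by iteration.
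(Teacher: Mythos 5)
Your proposal is correct and follows exactly the paper's own route: differentiate $F_n'=-H_n(x,1)F_n$ twice using \eqref{12} and the product rule to get \eqref{17}--\eqref{18}, equate with Lemma 2.1(a) for $r=2,3$ as in \eqref{19}, and set $x=0$; your sign bookkeeping and the coefficient $3=1+2$ both match the paper's computation. The closing observation that $F_n^{(r)}/F_n$ is the complete Bell polynomial in the derivatives of $\log F_n$ is a nice conceptual addition not present in the paper, but the proof itself is the same.
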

From Theorem 2.3, by binomial inversion we obtain
\begin{align}
&\frac{1}{2!}\sum_{k=0}^{n}\binom{n}{k}(-1)^{k}\frac{H_{k}(x,2)+\big(H_{k}(x,1)\big)^{2}}{\binom{x+k+1}{k}(x+1)}=\frac{1}{(x+n+1)^{3}}, \label{20} \\
&\frac{1}{3!}\sum_{k=0}^{n}\binom{n}{k}(-1)^{k}\bigg(\frac{2H_{k}(x,3)+3H_{k}(x,1)H_{k}(x,2)+(H_{k}(x,1))^{3}}{\binom{x+k+1}{k}(x+1)}\bigg)=\frac{1}{(x+n+1)^{4}}
\label{21}.
\end{align}
By \eqref{20} and \eqref{4}, we get
\begin{equation*}
    \frac{1}{2!}\sum_{n=0}^{\infty}\sum_{k=0}^{n}\frac{\binom{n}{k}(-1)^{k}}{\binom{x+k+1}{k}(x+1)}\Big(H_{k}(x,2)+\big(H_{k}(x,1)\big)^{2}\Big)=\zeta(x+1,3).
\end{equation*}
For $x=0$, we have
\begin{displaymath}
\frac{1}{2!}\sum_{n=0}^{\infty}\sum_{k=0}^{n}\frac{\binom{n}{k}(-1)^{k}}{k+1}\Big(H_{k+1}^{(2)}+(H_{k+1})^{2}\Big)=\zeta(3).
\end{displaymath}
In addition, from \eqref{21} and \eqref{4}, we also have
\begin{displaymath}
\frac{1}{3!}\sum_{n=0}^{\infty}\sum_{k=0}^{n}\frac{\binom{n}{k}(-1)^{k}}{\binom{x+k+1}{k}(x+1)}\Big(2H_{k}(x,3)+3H_{k}(x,1)H_{k}(x,2)+\big(H_{k}(x,1)\big)^{3}\Big)=\zeta(x+1,4).
\end{displaymath}
For $x=0$, from \eqref{3} we get
\begin{displaymath}
    \frac{1}{3!}\sum_{n=0}^{\infty}\sum_{k=0}^{n}\frac{\binom{n}{k}(-1)^{k}}{k+1}\Big(2H_{k+1}^{(3)}+3H_{k+1}H_{k+1}^{(2)}+(H_{k+1})^{3}\Big)=\sum_{n=0}^{\infty}\frac{1}{(n+1)^4}=\frac{\pi^{4}}{90}.
\end{displaymath}
From Theorem 2.3 and Lemma 2.1 (c), we note that
\begin{equation}
\frac{1}{2!}\sum_{n=1}^{\infty}\frac{H_{n+1}^{(2)}+(H_{n+1})^{2}}{n(n+1)}=\sum_{n=1}^{\infty}\frac{1}{n}\sum_{k=0}^{n}\binom{n}{k}\frac{(-1)^{k}}{(k+1)^{3}}=3, \label{22}
\end{equation}
and that
\begin{equation}
\frac{1}{3!}\sum_{n=1}^{\infty}\frac{2H_{n+1}^{(3)}+3H_{n+1}H_{n+1}^{(2)}+(H_{n+1})^{3}}{n(n+1)}=\sum_{n=1}^{\infty}\frac{1}{n}\sum_{k=0}^{n}\binom{n}{k}(-1)^{k}\frac{1}{(k+1)^{4}}=4. \label{23} \\
\end{equation}
Therefore, by \eqref{22} and \eqref{23}, we obtain the following corollary.
\begin{corollary}
For $n\ge 1$, we have
\begin{equation*}
\sum_{n=1}^{\infty}\frac{H_{n+1}^{(2)}+(H_{n+1})^{2}}{n(n+1)}=3!,\quad
\sum_{n=1}^{\infty}\frac{2H_{n+1}^{(3)}+3H_{n+1}H_{n+1}^{(2)}+(H_{n+1})^{3}}{n(n+1)}=4!.
\end{equation*}
\end{corollary}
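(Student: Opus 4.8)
The plan is to derive both identities directly from the two $x=0$ relations recorded at the end of Theorem 2.3, by dividing each through by $n$, summing over $n\ge 1$, and then invoking Lemma 2.1(c). The guiding observation is that the factorial prefactor $r!$ coming from Lemma 2.1(a) is already baked into those specializations, so that weighting each side by $\frac{1}{n}$ and summing turns the right-hand sides into instances of Lemma 2.1(c); the factorials $3!$ and $4!$ then appear as the products $2!\cdot 3$ and $3!\cdot 4$.

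Concretely, I would first take the second-order specialization of Theorem 2.3,
\[
\frac{H_{n+1}^{(2)}+(H_{n+1})^{2}}{n+1}=2!\sum_{k=0}^{n}\binom{n}{k}\frac{(-1)^{k}}{(k+1)^{3}},
\]
divide by $n$ for $n\ge 1$, and sum over $n$, obtaining
\[
\sum_{n=1}^{\infty}\frac{H_{n+1}^{(2)}+(H_{n+1})^{2}}{n(n+1)}=2!\sum_{n=1}^{\infty}\frac{1}{n}\sum_{k=0}^{n}\binom{n}{k}\frac{(-1)^{k}}{(k+1)^{3}}.
\]
By Lemma 2.1(c) with $r=3$ the double sum on the right equals $3$, so the left-hand side is $2!\cdot 3=3!$, which is the first claim. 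The second identity follows in exactly the same fashion from the third-order specialization of Theorem 2.3: dividing by $n$, summing over $n\ge 1$, and applying Lemma 2.1(c) with $r=4$ gives $3!\cdot 4=4!$.

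The only point that deserves a word of care is the legitimacy of the term-by-term summation, namely that the two left-hand series converge so that the evaluations supplied by Lemma 2.1(c) may be read off verbatim. Since $H_{n+1}\sim\log n$ while $H_{n+1}^{(2)}$ and $H_{n+1}^{(3)}$ stay bounded, the numerators grow at most polylogarithmically whereas the denominator $n(n+1)$ grows like $n^{2}$; hence both series converge absolutely and no rearrangement issue arises. In short, I expect no genuine obstacle: the corollary is a clean bookkeeping consequence of Theorem 2.3 and Lemma 2.1(c), and the main task is simply to track the factorial constants correctly.
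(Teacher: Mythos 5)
Your proposal is correct and follows essentially the same route as the paper: equations (22) and (23) of the paper are obtained exactly by taking the $x=0$ specializations of Theorem 2.3, weighting by $\frac{1}{n}$, summing over $n\ge 1$, and invoking Lemma 2.1(c) with $r=3$ and $r=4$ to produce $2!\cdot 3=3!$ and $3!\cdot 4=4!$. Your added remark on absolute convergence of the left-hand series is a sound (if implicit in the paper) justification for the term-by-term summation.
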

From \eqref{21}, Lemma 2.1 (b), \eqref{12} and \eqref{13}, we have
\begin{align}
F_{n}^{(4)}(x)&=\frac{d}{dx}\Big(2(-1)^{3}H_{n}(x,3)F_n(x)+3(-1)^{3}H_{n}(x,1)H_{n}(x,2)F_n(x)+(-1)^{3}(H_{n}(x,1))^{3}F_n(x)\Big)\label{24}    \\
&=6(-1)^{4}H_{n}(x,4)F_n(x)+2(-1)^{4}H_{n}(x,3)H_{n}(x,1)F_n(x)+3(-1)^{4}H_{n}(x,2)H_{n}(x,2)F_n(x)\nonumber \\
&+6(-1)^{4}H_{n}(x,1)H_{n}(x,3)F_n(x)+3(-1)^{4}\big(H_{n}(x,1)\big)^{2}H_{n}(x,2)F_n(x) \nonumber \\
&+3(-1)^{4}\big(H_{n}(x,1)\big)^{2}H_{n}(x,2)F_n(x)+(-1)^{4}\big(H_{n}(x,1)\big)^{4}F_n(x)\nonumber \\
&=(-1)^{4}\Big(6H_{n}(x,4)+8H_{n}(x,3)H_{n}(x,1)+3\big(H_{n}(x,2)\big)^{2}+6\big(H_{n}(x,1)\big)^{2}H_{n}(x,2)\nonumber\\
&+\big(H_{n}(x,1)\big)^{4}\Big)F_n(x)\nonumber\\
&=(-1)^{4}\frac{6H_{n}(x,4)+8H_{n}(x,3)H_{n}(x,1)+3(H_{n}(x,2))^{2}+6(H_{n}(x,1))^{2}H_{n}(x,2)+(H_{n}(x,1))^{4}}{\binom{x+n+1}{n}(x+1)},\nonumber
\end{align}
and
\begin{equation}
F_{n}^{(4)}(0)=(-1)^{4}\frac{6H_{n+1}^{(4)}+8H_{n+1}^{(3)}H_{n+1}+3(H_{n+1}^{(2)})^{2}+6(H_{n+1})^{2}H_{n+1}^{(2)}+(H_{n+1})^{4}}{n+1}. \label{25}
\end{equation}
On the other hand, by Lemma 2.1 (a), we get
\begin{equation}
F_{n}^{(4)}(x)=4!\sum_{k=0}^{n}\binom{n}{k}(-1)^{k-4}\frac{1}{(k+x+1)^{5}}= \int_{0}^{1}(1-t)^{n}(\log t)^{4}t^{x}dt,\label{26}
\end{equation}
and
\begin{equation}
F_{n}^{(4)}(0)=4!\sum_{k=0}^{n}\binom{n}{k}(-1)^{k-4}\frac{1}{(k+1)^{5}}= \int_{0}^{1}(1-t)^{n}(\log t)^{4}dt.\label{27}
\end{equation}
By \eqref{24}-\eqref{27}, we get
\begin{align}
&\sum_{k=0}^{n}\binom{n}{k}(-1)^{k}\frac{1}{(k+x+1)^{5}}=\frac{1}{4!} \int_{0}^{1}(1-t)^{n}(\log t)^{4}t^{x}dt \label{28}\\
&=\frac{6H_{n}(x,4)+8H_{n}(x,3)H_{n}(x,1)+3(H_{n}(x,2))^{2}+6(H_{n}(x,1))^{2}H_{n}(x,2)+(H_{n}(x,1))^{4}}{4!\binom{x+n+1}{n}(x+1)},\nonumber
\end{align}
and
\begin{align}
&\sum_{k=0}^{n}\binom{n}{k}(-1)^{k}\frac{1}{(k+1)^{5}}=\frac{1}{4!}\int_{0}^{1}(1-t)^{n}(\log t)^{4} dt \label{29}\\
&=\frac{6H_{n+1}^{(4)}+8H_{n+1}^{(3)}H_{n+1}+3(H_{n+1}^{(2)})^{2}+6(H_{n+1})^{2}H_{n+1}^{(2)}+(H_{n+1})^{4}}{4!(n+1)}.\nonumber
\end{align}
Therefore, from \eqref{28} and \eqref{29}, by binomial inversion we obtain the following theorem.
\begin{theorem}
For $n\ge 0$, we have
\begin{align*}
&\sum_{k=0}^{n}\binom{n}{k}(-1)^{k}\frac{6H_{k}(x,4)+8H_{k}(x,3)H_{k}(x,1)+3(H_{k}(x,2))^{2}+6(H_{k}(x,1))^{2}H_{k}(x,2)+(H_{k}(x,1))^{4}}{4!\binom{x+k+1}{k}(x+1)}\\
&=\frac{1}{(n+x+1)^{5}},
\end{align*}
and
\begin{equation*}
\frac{1}{4!}\sum_{k=0}^{n}\binom{n}{k}(-1)^{k}\frac{6H_{k+1}^{(4)}+8H_{k+1}^{(3)}H_{k+1}+3(H_{k+1}^{(2)})^{2}+6(H_{k+1})^{2}H_{k+1}^{(2)}+(H_{k+1})^{4}}{k+1}=\frac{1}{(n+1)^{5}} .
\end{equation*}
\end{theorem}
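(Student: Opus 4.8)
The plan is to read both displayed identities as the binomial-inversion duals of the two identities \eqref{28} and \eqref{29} established just above the statement. For the first identity I would put
\[
a_k=\frac{1}{(k+x+1)^{5}},\qquad b_n=\sum_{k=0}^{n}\binom{n}{k}(-1)^{k}a_k .
\]
By \eqref{28} the quantity $b_n$ equals
\[
b_n=\frac{6H_{n}(x,4)+8H_{n}(x,3)H_{n}(x,1)+3(H_{n}(x,2))^{2}+6(H_{n}(x,1))^{2}H_{n}(x,2)+(H_{n}(x,1))^{4}}{4!\binom{x+n+1}{n}(x+1)} .
\]
The relation $b_n=\sum_{k=0}^{n}\binom{n}{k}(-1)^{k}a_k$ is exactly the left-hand side of the equivalence in Theorem 1.1, so the inversion yields $a_n=\sum_{k=0}^{n}\binom{n}{k}(-1)^{k}b_k$; written out, this is precisely the first claimed identity, with $\tfrac{1}{(n+x+1)^{5}}$ on the right.

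The second identity I would obtain as the specialization $x=0$ of the same argument, using $H_k(0,\alpha)=H_{k+1}^{(\alpha)}$ and $\binom{k+1}{k}\cdot 1=k+1$, so that \eqref{29} plays the role that \eqref{28} plays above and binomial inversion produces $\tfrac{1}{(n+1)^{5}}$ on the right. No work beyond this substitution is needed for the second part.

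Consequently the genuine content lies not in the final inversion but in the input identity \eqref{28}, and that is the step I expect to be the main obstacle. One establishes \eqref{28} by evaluating $F_n^{(4)}(x)$ in two ways: Lemma 2.1(a) gives the alternating sum $4!\sum_{k}\binom{n}{k}(-1)^{k}(k+x+1)^{-5}$ directly, while four successive differentiations of $F_n^{\prime}(x)=-H_n(x,1)F_n(x)$ via \eqref{12} produce the harmonic-function numerator. The delicate part is the bookkeeping in the second computation: every differentiation either replaces a factor $F_n$ by $-H_n(x,1)F_n$ or raises an $H_n(x,\alpha)$ to $-\alpha H_n(x,\alpha+1)$, and the many resulting monomials must be collected correctly. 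A reliable way to pin down the coefficients $6,8,3,6,1$ is to note that $F_n^{(r)}/F_n$ equals the complete Bell polynomial $B_r$ in the logarithmic derivatives $(\log F_n)^{(j)}$, which here are $-H_n(x,1),\,H_n(x,2),\,-2H_n(x,3),\,6H_n(x,4)$; since $B_4(x_1,x_2,x_3,x_4)=x_1^{4}+6x_1^{2}x_2+3x_2^{2}+4x_1x_3+x_4$, substitution reproduces all five terms of \eqref{28}. With \eqref{28} (and its $x=0$ form \eqref{29}) in hand, Theorem 1.1 closes the argument at once.
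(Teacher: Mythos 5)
Your proposal is correct and follows the paper's own route exactly: the theorem is obtained by applying the binomial inversion of Theorem 1.1 to the identities \eqref{28} and \eqref{29}, which in turn come from computing $F_n^{(4)}(x)$ both by Lemma 2.1(a) and by repeated differentiation of $F_n'(x)=-H_n(x,1)F_n(x)$ using \eqref{12}. Your complete Bell polynomial check of the coefficients $6,8,3,6,1$ is a useful independent verification not in the paper, but the substance of the argument is the same.
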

By Theorem 2.5 and \eqref{4}, we have
\begin{align*}
&\sum_{n=0}^{\infty}\sum_{k=0}^{n}\binom{n}{k}(-1)^{k}\frac{6H_{k}(x,4)+8H_{k}(x,3)H_{k}(x,1)+3(H_{k}(x,2))^{2}+6(H_{k}(x,1))^{2}H_{k}(x,2)+(H_{k}(x,1))^{4}}{4!\binom{x+k+1}{k}(x+1)}\\
&=\sum_{n=0}^{\infty}\frac{1}{(n+x+1)^{5}}=\zeta(x+1,5).
\end{align*}
For $x=0$, we also have
\begin{align*}
&\frac{1}{4!}\sum_{n=0}^{\infty}\sum_{k=0}^{n}\binom{n}{k}(-1)^{k}\frac{6H_{k+1}^{(4)}+8H_{k+1}^{(3)}H_{k+1}+3(H_{k+1}^{(2)})^{2}+6(H_{k+1})^{2}H_{k+1}^{(2)}+(H_{k+1})^{4}}{k+1}\\
&=\sum_{n=0}^{\infty}\frac{1}{(n+1)^5}=\zeta(5).
\end{align*}
From \eqref{29} and Lemma 2.1 (c), we note that
\begin{align*}
    &\frac{1}{4!}\sum_{n=1}^{\infty}\frac{6H_{n+1}^{(4)}+8H_{n+1}^{(3)}H_{n+1}+3(H_{n+1}^{(2)})^{2}+6(H_{n+1})^{2}H_{n+1}^{(2)}+(H_{n+1})^{4}}{n(n+1)}\\
    &=\sum_{n=1}^{\infty}\frac{1}{n}\sum_{k=0}^{n}\binom{n}{k}(-1)^{k}\frac{1}{(k+1)^{5}}=5. \\
\end{align*}
Hence, we have
\begin{align*}
    &\sum_{n=1}^{\infty}\frac{6H_{n+1}^{(4)}+8H_{n+1}^{(3)}H_{n+1}+3(H_{n+1}^{(2)})^{2}+6(H_{n+1})^{2}H_{n+1}^{(2)}+(H_{n+1})^{4}}{n(n+1)}=5!.
\end{align*}
From Lemma 2.1 (a), (b), and \eqref{12}, we note that
\begin{align*}
    &(r+1)!\sum_{k=0}^{n}\binom{n}{k}(-1)^{k-r}\frac{1}{(k+x+1)^{r+2}}=\frac{d^{r}}{dx^{r}}(H_{n}(x,1)F_n(x)) \\
    &=\sum_{l=0}^{r}\binom{r}{l}\bigg(\frac{d^{l}}{dx^{l}}H_{n}(x,1)\bigg)\bigg(\frac{d^{r-l}}{dx^{r-l}}F_n(x)\bigg) \\
    &=\sum_{l=0}^{r}\binom{r}{l}l!(-1)^{l}H_{n}(x,l+1)\int_{0}^{1}(1-t)^{n}\big(\log t\big)^{r-l}t^{x}dt,
\end{align*}
which gives us
\begin{align}
\sum_{k=0}^{n}&\binom{n}{k}(-1)^{k}\frac{1}{(k+x+1)^{r+2}}. \label{30}\\
&=\frac{(-1)^r}{(r+1)!}\sum_{l=0}^{r}\binom{r}{l}l!(-1)^{l}H_{n}(x,l+1)\int_{0}^{1}(1-t)^{n}\big(\log t\big)^{r-l}t^{x}dt. \nonumber
\end{align}
From \eqref{30}, by binomial inversion and summing over $n$,  we obtain
\begin{align}
\frac{(-1)^r}{(r+1)!}&\sum_{n=0}^{\infty}\sum_{k=0}^{n}\binom{n}{k}(-1)^{k}\sum_{l=0}^{r}\binom{r}{l}l!(-1)^{l}H_{k}(x,l+1)\int_{0}^{1}(1-t)^{k}\big(\log t\big)^{r-l}t^{x}dt \label{31}\\
&=\sum_{n=0}^{\infty}\frac{1}{(n+x+1)^{r+2}}=\zeta(x+1, r+2),\nonumber
\end{align}
which, for $x=0$, yields
\begin{align*}
\frac{(-1)^r}{(r+1)!}&\sum_{n=0}^{\infty}\sum_{k=0}^{n}\binom{n}{k}(-1)^{k}\sum_{l=0}^{r}\binom{r}{l}l!(-1)^{l}H_{k+1}^{(l+1)}\int_{0}^{1}(1-t)^{k}\big(\log t\big)^{r-l}dt \\
&=\sum_{n=0}^{\infty}\frac{1}{(n+1)^{r+2}}=\zeta(r+2).
\end{align*}
By \eqref{30} with $x=0$ and Lemma 2.1 (c), we have
\begin{align}
\frac{(-1)^r}{(r+1)!}&\sum_{l=0}^{r}\binom{r}{l}l!(-1)^{l}\sum_{n=1}^{\infty}\frac{1}{n}H_{n+1}^{(l+1)}\int_{0}^{1}(1-t)^{n}(\log t)^{r-l}dt \label{32}\\
&=\sum_{n=1}^{\infty}\frac{1}{n}\sum_{k=0}^{n}\binom{n}{k}(-1)^{k}\frac{1}{(k+1)^{r+2}}=r+2. \nonumber
\end{align}
Thus, from \eqref{30}-\eqref{32}, we obtain the following theorem.
\begin{theorem}
For $n, r \ge 0$, we have the following identities:
\begin{align*}
&\sum_{k=0}^{n}\binom{n}{k}(-1)^{k}\frac{1}{(k+x+1)^{r+2}}
=\frac{(-1)^r}{(r+1)!}\sum_{l=0}^{r}\binom{r}{l}l!(-1)^{l}H_{n}(x,l+1)\int_{0}^{1}(1-t)^{n}\big(\log t\big)^{r-l}t^{x}dt, \\
&\frac{(-1)^r}{(r+1)!}\sum_{n=0}^{\infty}\sum_{k=0}^{n}\binom{n}{k}(-1)^{k}\sum_{l=0}^{r}\binom{r}{l}l!(-1)^{l}H_{k}(x,l+1)\int_{0}^{1}(1-t)^{k}\big(\log t\big)^{r-l}t^{x}dt \\
&=\zeta(x+1, r+2),
\end{align*}
and
\begin{equation*}
\sum_{l=0}^{r}\binom{r}{l}l!(-1)^{l}\sum_{n=1}^{\infty}\frac{1}{n}H_{n+1}^{(l+1)}\int_{0}^{1}(1-t)^{n}(\log t)^{r-l}dt=(-1)^r(r+2)!.
\end{equation*}
\end{theorem}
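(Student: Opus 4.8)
The plan is to obtain all three identities from a single computation: differentiating the product $H_n(x,1)F_n(x)$ exactly $r$ times in two different ways and equating the results. On one side, Lemma 2.1~(b) gives $H_n(x,1)F_n(x) = -F_n'(x)$, so that $\frac{d^r}{dx^r}\big(H_n(x,1)F_n(x)\big) = -F_n^{(r+1)}(x)$; applying Lemma 2.1~(a) with $r$ replaced by $r+1$ rewrites this as $(r+1)!\sum_{k=0}^n\binom{n}{k}(-1)^{k-r}\frac{1}{(k+x+1)^{r+2}}$. On the other side I would expand by the Leibniz rule, which requires the higher derivatives of $H_n(x,1)$.

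First I would establish, by induction on $l$ using \eqref{12}, that $\frac{d^l}{dx^l}H_n(x,1) = (-1)^l l!\,H_n(x,l+1)$: the base case $l=0$ is immediate, and the inductive step follows from $\frac{d}{dx}H_n(x,l+1) = -(l+1)H_n(x,l+2)$. Combined with $\frac{d^{r-l}}{dx^{r-l}}F_n(x) = \int_0^1 (1-t)^n(\log t)^{r-l}t^x\,dt$ from Lemma 2.1~(a), the Leibniz expansion becomes $\sum_{l=0}^r\binom{r}{l}(-1)^l l!\,H_n(x,l+1)\int_0^1 (1-t)^n(\log t)^{r-l}t^x\,dt$. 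Equating the two expressions, dividing by $(r+1)!$, and absorbing the sign via $(-1)^{k-r} = (-1)^{k}(-1)^r$ yields the first identity, which is exactly \eqref{30}.

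The second identity then follows by reading the first as $b_n = \sum_{k=0}^n\binom{n}{k}(-1)^k a_k$ with $a_k = (k+x+1)^{-(r+2)}$, applying the binomial inversion of Theorem~1.1 to recover $a_n = (n+x+1)^{-(r+2)}$ as the inverse transform of the right-hand side, and summing over $n\ge 0$; the series $\sum_{n\ge0}(n+x+1)^{-(r+2)}$ is $\zeta(x+1,r+2)$ by \eqref{4} and converges since $r+2\ge 2>1$. For the third identity I would instead set $x=0$ in the first identity, use $H_n(0,\alpha) = H_{n+1}^{(\alpha)}$ to replace each $H_n(0,l+1)$ by $H_{n+1}^{(l+1)}$, multiply through by $1/n$, and sum over $n\ge 1$; Lemma 2.1~(c) with $r$ replaced by $r+2$ evaluates the resulting left-hand series as $r+2$, and clearing the prefactor $\frac{(-1)^r}{(r+1)!}$ together with $(r+1)!\,(r+2) = (r+2)!$ produces the stated closed form $(-1)^r(r+2)!$.

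The routine part is the sign and factorial bookkeeping in the Leibniz step, which I would track carefully since a single misplaced $(-1)$ propagates through all three identities. The genuine subtlety lies in the passage to the infinite sums over $n$ in the second and third identities: interchanging $\sum_{n=0}^\infty$ (respectively $\sum_{n=1}^\infty\frac1n$) with the finite inner sum and the integral involves terms that are not of one sign, so strictly it should be justified by absolute or uniform convergence. This is the one place where the argument needs more than the formal manipulation that suffices elsewhere.
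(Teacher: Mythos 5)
Your proposal is correct and follows essentially the same route as the paper: differentiating $H_n(x,1)F_n(x)=-F_n'(x)$ a total of $r$ times, using the Leibniz rule together with $\frac{d^l}{dx^l}H_n(x,1)=(-1)^l l!\,H_n(x,l+1)$ and Lemma 2.1 (a) to obtain the first identity, then binomial inversion with summation over $n$ for the second, and the $x=0$ specialization with Lemma 2.1 (c) for the third. Your added remark about justifying the interchange of limits in the infinite sums is a point the paper passes over silently, but it does not change the argument.
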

\section{Conclusion}
In this paper, we used elementary methods in order to derive some identities involving generalized harmonic numbers and generalized harmonic functions from the beta functions $F_{n}(x)=B(x+1,n+1),\,\,(n=0,1,2,\dots)$. In more detail, for every $r=2,3,4,5$, we showed that
\begin{equation}
\sum_{k=0}^{n}\binom{n}{k}(-1)^{k}\frac{1}{(k+x+1)^{r}},\quad \zeta(x+1,r),\quad r! \label{33}
\end{equation}
are all expressed in terms of generalized harmonic functions and generalized harmonic numbers. The methods employed in this paper could be continued further for $r=6,7,8,\dots$, which requires to find an explicit expression for $F_{n}^{(r-1)}(x)$ in terms of the generalized harmonic functions. In Theorem 2.6, for any $r=2,3,4,\dots$, we found expressions for \eqref{33} without deriving the explicit expressions for
$F_{n}^{(r-l)}(x)=\int_{0}^{1}(1-t)^{n}(\log t)^{r-l}dt$ in terms of the generalized harmonic functions.

\end{document}